\documentclass[12pt]{amsart}

\usepackage{amsmath,amsthm,amssymb,bm}
\usepackage{hyperref}
\usepackage{cleveref}
\usepackage{graphicx,color}
\usepackage{a4wide}

\usepackage{tikz}
\usetikzlibrary{calc}
\usepackage{ytableau}
\numberwithin{equation}{section}

\usepackage{enumitem}
\setlist[itemize]{label=\textbullet}

\newtheorem*{thm}{Theorem}
\newtheorem*{defn}{Definition}
\newtheorem{lem}{Lemma}

\newtheorem*{ques}{Question}

\newcommand\sym{\mathsf{Sym}}

\newcommand\cell{\mathsf{cell}}

\newcommand\Sym{\sym}
\newcommand\calC{\mathcal{C}}
\newcommand\SSYT{\mathsf{SSYT}}
\newcommand\RSPP{\mathsf{RSPP}}

\newcommand\wt{\mathrm{wt}}
\newcommand\Fix{\mathsf{Fix}}

\newcommand{\brick}[3]{ \draw (#1,#2) rectangle ++(#3,1);}

\title[Sign-reversing involution for the Schur antipode]{A sign-reversing involution for the antipode of Schur functions}

\author{Younggwang Cho}
\address{Department of Mathematics, Sungkyunkwan University, Suwon, South Korea}
\email{brglory@g.skku.edu}

\author{Byung-Hak Hwang}
\address{Center for Artificial Intelligence and Natural Sciences, Korea Institute for Advanced Study, Seoul, South Korea}
\email{byunghakhwang@gmail.com}

\author{Hojoon Lee}
\address{Department of Mathematics, Sungkyunkwan University, Suwon, South Korea}
\email{hojoon1101@g.skku.edu}

\begin{document}
\ytableausetup
{mathmode, boxframe=normal, boxsize=0.7cm,centertableaux}
\begin{abstract}
  We resolve a question posed by Benedetti and Sagan by constructing
  a sign-reversing involution on Takeuchi's expansion that yields
  the antipode for the ring of symmetric functions in terms of the Schur basis.
\end{abstract}

\maketitle
\section{Introduction}
Many combinatorial objects admit two natural operations: joining and restriction.
Such operations often endow these objects with a natural Hopf algebra structure,
revealing rich combinatorial and algebraic properties.
The study of combinatorial Hopf algebras has thus become a central theme in
algebraic combinatorics.
A fundamental and distinguished component of a Hopf algebra is its antipode.
It plays a central role both in the theory and in applications of Hopf algebras.
Takeuchi~\cite{Takeuchi1971} showed that every connected graded bialgebra
\( (H, m, u, \Delta, \epsilon) \) is a Hopf algebra, with antipode \( S \) given by
\begin{equation} \label{eq:Takeuchi}
  S = \sum_{k \ge 0}(-1)^km^{k-1}\pi^{\otimes k}\Delta^{k-1}.
\end{equation}
Here \(m^{k-1}\) (resp., \(\Delta^{k-1}\)) denotes the \((k-1)\)-fold product (resp., coproduct), and we adopt the convention \(m^{-1}=u\), \(\Delta^{-1}=\epsilon\).
Moreover, \(\pi\) is the projection with \(\pi|_{H_0}=0\) and \(\pi|_{\oplus_{n\ge1}H_n}=\mathrm{id}\).
Although this expression is completely general, it typically involves extensive cancellation and is rarely practical for explicit computations.
To handle this issue, Benedetti and Sagan~\cite{Benedetti2017} employed a well-structured
combinatorial method, constructing a sign-reversing involution that provides
an elegant way to extract cancellation-free formulas from Takeuchi's expansion.
They successfully applied this technique to numerous combinatorial Hopf algebras.
Their proofs do not use algebraic machinery, but rely only on elementary
combinatorial features of each Hopf algebra, and thus help us understand these
Hopf algebras at a fundamental level.

Perhaps the most prominent instance of a Hopf algebra in combinatorics is the ring \( \Sym \) of
symmetric functions, which occupies a central position in algebraic combinatorics,
serving as a universal object that connects representation theory, algebraic geometry,
and other areas.
Moreover, among its many distinguished bases, the \emph{Schur basis}
\( \{s_\lambda\} \) stands out as the most fundamental and important.
The antipode formula for Schur functions is well-known:
\begin{thm}[\cite{Hall1959}] \label{thm:antipode_and_involution}
  The antipode in \( \Sym \) is given by
  \begin{equation}\label{eq:antipode}
    S(s_{\lambda/\mu}) = (-1)^{|\lambda|-|\mu|}s_{\lambda^t/\mu^t},
  \end{equation}
  where \( \lambda^t/\mu^t \) denotes the conjugate skew shape.
\end{thm}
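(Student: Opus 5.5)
Our plan is to compute $S(s_{\lambda/\mu})$ directly from Takeuchi's formula~\eqref{eq:Takeuchi} and cancel terms with a sign-reversing involution. First, iterate the coproduct of skew Schur functions. We use $\Delta s_{\lambda/\mu}=\sum_{\mu\subseteq\nu\subseteq\lambda}s_{\nu/\mu}\otimes s_{\lambda/\nu}$ and apply $\pi^{\otimes k}$, which discards empty pieces. This gives
\[
S(s_{\lambda/\mu})=\sum_{k\ge0}(-1)^k\sum_{\mu=\nu^0\subsetneq\nu^1\subsetneq\cdots\subsetneq\nu^k=\lambda}\ \prod_{i=1}^k s_{\nu^i/\nu^{i-1}} .
\]
Next we expand each product combinatorially. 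A product $\prod_i s_{\nu^i/\nu^{i-1}}$ is the generating function of tuples $(T_1,\dots,T_k)$, where $T_i$ is a semistandard tableau of shape $\nu^i/\nu^{i-1}$. It is convenient to record such a tuple as a single filling $F$ of $\lambda/\mu$ together with a ``level'' function: each cell gets a pair (level $i$, entry $T_i(c)$). Within a level the entries are semistandard, and the levels weakly increase along rows and columns. The signed set consists of pairs (chain, filling), and $\wt$ is the usual monomial $x^{F}$.

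We then build the involution. Fix the variables and look at the cells carrying the smallest entry value $a$ that occurs anywhere, or more robustly work value by value. For each value $a$, the cells with entry $a$ in a given level form a horizontal strip. We try to merge or split levels as follows. Choose the smallest $i$ such that either level $i$ can be split by peeling off part of its cells in a canonical way, or levels $i$ and $i+1$ can be merged into one semistandard skew tableau. Either operation changes $k$ by one, so it reverses the sign and preserves the weight. The canonical choice should be made via a reading order on the entries: for instance, split level $i$ right before the first occurrence, in reading order, of the maximal entry, and merge only when the merged filling is semistandard and splitting it again reproduces the pair. Since the operations are mutually inverse and we always act at the minimal index, the map is an involution.

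Finally we identify the fixed points and show that they give $(-1)^{|\lambda/\mu|}s_{\lambda^t/\mu^t}$. We expect every fixed point to have all levels of size one, so that $k=|\lambda/\mu|$ and the chain is a standard filling of the cells by levels. In addition, consecutive cells must be forced to be non-mergeable, meaning entries weakly increase along columns and strictly increase along rows. Transposing then gives exactly the semistandard tableaux of shape $\lambda^t/\mu^t$, with the sign $(-1)^{|\lambda|-|\mu|}$. The main obstacle is making the merge/split rule canonical, so that it is genuinely an involution with exactly this fixed-point set. Our first attempt would act on the cells containing the largest value $n$ (working by induction on the number of variables, as in the proof that $S(h_n)=(-1)^ne_n$) and reduce to the horizontal-strip case there. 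We would then verify compatibility with the level order cell by cell.
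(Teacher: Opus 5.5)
\textbf{There is a genuine gap.} Your reduction of Takeuchi's formula to a signed sum over strict chains with semistandard fillings is correct. The core of the proof, however, is missing: an explicit sign-reversing involution together with a check that it really is one. The rule you do sketch fails.

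\emph{The selection rule.} Acting at the smallest index $i$ where a split of level $i$ or a merge of levels $i,i+1$ is available does not give an involution. Take $\lambda=(3)$, $\mu=\emptyset$ and the one-row filling $1,2,3$, listing each level by its entries, and write $\Phi$ for your map. Your split sends $(\{1,2,3\})$ to $(\{1,2\},\{3\})$. At $i=1$ this element admits both a split of level $1$ and the merge of levels $1,2$. If splits take precedence, it is sent on to $(\{1\},\{2\},\{3\})$. If merges take precedence, then $(\{1\},\{2\},\{3\})\mapsto(\{1,2\},\{3\})\mapsto(\{1,2,3\})$. Either way $\Phi\circ\Phi\neq\mathrm{id}$. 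After a split the first piece is typically still splittable at the same index, so ``mutually inverse operations applied at the minimal index'' does not imply involutivity.

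\emph{The split itself.} Cutting a level's reading word just before the first maximal entry can leave an empty first piece. For the level $(2,1)/(1)$ with both cells filled with $1$, the maximal entry comes first in any row reading order. This is exactly the step you flag as the main obstacle, and it is left undone.

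\emph{How the paper does it.} The paper fixes a total order on cells: compare entries, then columns (smaller column is smaller), then rows (lower row is smaller).
\begin{itemize}
\item A cell is splittable if it is the largest cell of a level with at least two cells. It is then an outer corner, so removing it leaves a valid skew shape.
\item A cell is mergeable if it alone forms a level $i>1$, its union with level $i-1$ is semistandard, and it exceeds every cell of level $i-1$.
\end{itemize}
One always acts at the globally largest splittable-or-mergeable cell, not at the smallest index. The key verification is that after the split or merge, this same cell is still the largest candidate. The tie-break for equal entries is essential here, and your sketch does not specify one.

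\emph{Fixed points.} Your predicted fixed points do not match your own rule. If the maximal cell is peeled off into the later level, non-mergeability of consecutive singleton levels forces entries to weakly decrease as the level increases. So, with the paper's tie-break, the fixed points are row-strict plane partitions (strictly decreasing rows, weakly decreasing columns), not fillings with increasing rows and columns. Their generating function is $s_{\lambda^t/\mu^t}$ only after reversing the alphabet and invoking the symmetry of skew Schur functions, and your argument would need to include that step.
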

However, existing proofs of the formula rely on algebraic identities or properties
of the \( \omega \) involution, rather than providing a direct combinatorial derivation
from Takeuchi's formula.
The absence of such an elementary proof has left a gap in the theory,
and Benedetti and Sagan posed the following question:
\begin{ques}[\cite{Benedetti2017}] \label{ques}
  Is there a sign-reversing involution that yields a cancellation-free formula
  for the antipode of \( \Sym \) when expressed in the Schur basis?
\end{ques}

In this paper, we answer the question by constructing a natural sign-reversing
involution \( \Phi \) in Takeuchi's expansion that yields the antipode
formula~\eqref{eq:antipode} in a purely combinatorial manner,
thereby completing the picture of sign-reversing involution proofs for antipodes
in the major combinatorial Hopf algebras.

\section{Proof of Theorem}
We follow the convention used in \cite{Grinberg2014}, and all the material
we use below can be found therein.

It is well known that \( \Sym \) has a graded connected Hopf algebra structure.
The coproduct \( \Delta \) on Schur functions is given by
\[
  \Delta(s_{\lambda / \mu}) = \sum_{\mu \subseteq \nu \subseteq \lambda} s_{\nu / \mu} \otimes s_{\lambda / \nu}.
\]
Thus, we have
\[
  \Delta^{k-1}(s_{\lambda / \mu}) = \sum_{\mu = \lambda_0 \subseteq \lambda_1 \subseteq \cdots \subseteq \lambda_k = \lambda}
  s_{\lambda_1 / \lambda_0} \otimes s_{\lambda_2 / \lambda_1} \otimes \cdots \otimes s_{\lambda_k / \lambda_{k-1}}
\]
for any \( k \ge 1 \).
Applying Takeuchi's formula~\eqref{eq:Takeuchi}, we see that the projection
\( \pi \) enforces strict inclusions:
\begin{equation}\label{eq:antipode=sum-takeuchi}
S(s_{\lambda/\mu})=  \sum_{k \ge 0}\sum_{\mu=\lambda_0 \subsetneq \lambda_1 \subsetneq \cdots \subsetneq \lambda_k = \lambda}(-1)^k
  s_{\lambda_1/\lambda_0} s_{\lambda_2 / \lambda_1} \cdots s_{\lambda_k / \lambda_{k-1}}.
\end{equation}
Since this expansion typically produces many cancelling terms, we construct an involution to derive~\eqref{eq:antipode} from Takeuchi's expansion~\eqref{eq:antipode=sum-takeuchi}.

Recall that \( s_{\lambda/\mu} \) is the generating function of semistandard Young tableaux of shape \( \lambda/\mu \) weighted by content:
\[
  s_{\lambda/\mu} = \sum_{T\in\SSYT(\lambda/\mu)} x^{T},
\]
where \( \SSYT(\lambda/\mu) \) denotes the set of semistandard
Young tableaux of shape \( \lambda/\mu \),
and \( x^T = \prod_{c\in T} x_{T(c)} \) where \( T(c) \) is the integer in the cell
\( c \) of \( T \).
Thus, \eqref{eq:antipode=sum-takeuchi} can be viewed as a signed sum
of products of generating functions of semistandard tableaux.
For partitions \(\mu\subseteq\lambda \), let
\[
  \calC_{\mu}^\lambda := \bigsqcup_{k\ge 0}~\{(\lambda_0,\dots,\lambda_k) \mid
  \mu=\lambda_0\subsetneq \lambda_1\subsetneq \cdots \subsetneq \lambda_k=\lambda \},
\]
and
\[
  X_\mu^\lambda :=
    \bigsqcup_{(\lambda_0,\dots,\lambda_k)\in\calC_{\mu}^\lambda}
    ~\prod_{i=1}^{k}\SSYT(\lambda_i/\lambda_{i-1}).
\]
For each element \( T=(T^{(1)},T^{(2)},\ldots,T^{(k)})\in X_\mu^\lambda \),
we define its \emph{length} to be \( \ell(T)=k \) and its \emph{weight}
to be \( \wt(T)=x^{T^{(1)}}x^{T^{(2)}}\cdots x^{T^{(k)}} \).
Then \eqref{eq:antipode=sum-takeuchi} becomes
\begin{equation}\label{eq:antipode=sum_X}
S(s_{\lambda/\mu})=  \sum_{T\in X_\mu^\lambda}(-1)^{\ell(T)}\wt(T).
\end{equation}
We construct a sign-reversing, weight-preserving involution \( \Phi \) on \( X_\mu^\lambda \) that yields~\eqref{eq:antipode}.
Given an element \( (T^{(1)},\dots,T^{(k)}) \in X_\mu^\lambda \), we obtain
a tableau \( T \) of shape \( \lambda/\mu \) by concatenating the tableaux \( T^{(i)} \).
For simplicity, we write \( T = (T^{(1)},T^{(2)},\ldots,T^{(k)}) \).
Note that in general \( T \) is not semistandard.
We now define a total order on the cells of \( T \): for cells \( c = (i, j) \) and
\( c' = (i', j') \) in \( T \),  we write \( c < c' \) if either
\begin{itemize}
  \item[(i)] \( T(c) < T(c') \);
  \item[(ii)] \( T(c) = T(c') \) and \( j < j' \); or
  \item[(iii)] \( T(c) = T(c') \), \( j = j' \) and \( i > i' \).
\end{itemize}

A cell \( c \) of \( T \) with \( c\in T^{(i)} \) is called \emph{splittable}
if \( |T^{(i)}| > 1 \) and \( c \) is the largest cell in \( T^{(i)} \),
and is called \emph{mergeable} if \( i > 1 \), \( |T^{(i)}| = 1 \),
\( T^{(i-1)}\sqcup T^{(i)} \) is semistandard, and \( c \) is larger than
any cell in \( T^{(i-1)} \).
We denote by \( \cell(T) \) the largest cell among splittable or mergeable cells of \( T \).
If no such cell exists, we set \( \cell(T)=\emptyset \).
We say that \( T \) is \emph{splittable} if \( \cell(T) \) is splittable,
and \emph{mergeable} if \( \cell(T) \) is mergeable.
\begin{defn} \label{def:involution}
  Define the map \( \Phi : X_\mu^\lambda \to X_\mu^\lambda \) as follows.
  For \( T=(T^{(1)},\ldots,T^{(k)})\in X_\mu^\lambda \),
  \[
    \Phi(T) :=
    \begin{cases}
      (T^{(1)},\ldots,T^{(i-1)},T^{(i)}\setminus \{\cell(T)\},T',T^{(i+1)},\ldots,T^{(k)}), & \text{if \( T \) is splittable} \\
      (T^{(1)},\ldots,T^{(i-1)}\sqcup T^{(i)},T^{(i+1)},\ldots,T^{(k)}), & \text{if \( T \) is mergeable} \\
      T, & \text{if}\ \cell(T)=\emptyset
    \end{cases}
  \]
  where \( T^{(i)} \) contains \( \cell(T) \), and \( T' \) is the tableau
  consisting of the single cell \( \cell(T) \).
\end{defn}

\begin{lem}\label{lem:well-defined}
The map \( \Phi \) is well-defined.
\end{lem}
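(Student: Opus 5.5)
We have to show that $\Phi(T)$ again lies in $X_\mu^\lambda$. That is, the new sequence of shapes must be a strictly increasing chain from $\mu$ to $\lambda$, and each new component must be a semistandard tableau of the corresponding skew shape. The case $\cell(T)=\emptyset$ is trivial, so we treat the splittable and mergeable cases separately.

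\emph{Mergeable case.} Let $c=\cell(T)$ with $c\in T^{(i)}$, $i>1$ and $|T^{(i)}|=1$. Then $T^{(i-1)}\sqcup T^{(i)}$ occupies $\lambda_i/\lambda_{i-2}$. This is a skew shape because $\lambda_{i-2}\subseteq\lambda_{i-1}\subseteq\lambda_i$. The definition of mergeability requires the union to be semistandard, so it lies in $\SSYT(\lambda_i/\lambda_{i-2})$. Deleting $\lambda_{i-1}$ from the chain leaves a chain that is still strict, and its length drops by one. So $\Phi(T)\in X_\mu^\lambda$, and nothing more is needed here.

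\emph{Splittable case.} Let $c=\cell(T)=(a,b)\in T^{(i)}$ with $|T^{(i)}|>1$, and let $c$ be the largest cell of $T^{(i)}$ in the total order. We need three facts.
\begin{enumerate}
\item[(a)] $\nu:=\lambda_i\setminus\{c\}$ is a partition. Equivalently, $c$ is an outer corner of $\lambda_i$.
\item[(b)] $\lambda_{i-1}\subsetneq\nu\subsetneq\lambda_i$.
\item[(c)] $T^{(i)}\setminus\{c\}$ is semistandard of shape $\nu/\lambda_{i-1}$, and the single cell $T'$ is trivially semistandard of shape $\lambda_i/\nu$.
\end{enumerate}

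Facts (b) and (c) are easy once (a) holds. Strictness in (b) follows from $|T^{(i)}|>1$. For (c), restricting a semistandard filling to a subskew shape keeps it semistandard.

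\emph{Main step: proving (a).} Suppose $c$ were not a corner of $\lambda_i$. Then either the cell $(a,b+1)$ or the cell $(a+1,b)$ lies in $\lambda_i$. Such a cell $d$ is not in $\lambda_{i-1}$: otherwise $c$ would be in $\lambda_{i-1}$, since $\lambda_{i-1}$ is a partition and $c$ lies weakly northwest of $d$. Hence $d\in T^{(i)}$.
\begin{itemize}
\item If $d=(a,b+1)$, then $T(d)\ge T(c)$ because rows weakly increase. If $T(d)>T(c)$, then $d>c$ by rule (i). If the entries are equal, then $d$ is in a larger column, so $d>c$ by rule (ii).
\item If $d=(a+1,b)$, then $T(d)>T(c)$ because columns strictly increase, so $d>c$ by rule (i).
\end{itemize}
Either way $c$ fails to be the largest cell of $T^{(i)}$, a contradiction.

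In fact, the order is designed exactly so that the maximal cell is the rightmost of the largest entries. Rule (iii), which prefers the lower cell in a column, is not needed here, since a column cannot contain equal entries within one semistandard $T^{(i)}$.

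I expect (a) to be the only substantive point, and I will write it carefully. Everything else is bookkeeping: checking chain strictness and that $\Phi(T)$ lands in the disjoint union defining $X_\mu^\lambda$.
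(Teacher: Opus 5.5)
Your proof is correct and follows the same route as the paper. The mergeable case is immediate. In the splittable case, the total order forces $\cell(T)$ to have no cell of $T^{(i)}$ to its right or below it, so it is a removable corner and $T^{(i)}\setminus\{\cell(T)\}$ is a semistandard tableau of a skew shape strictly between $\lambda_{i-1}$ and $\lambda_i$; your check that the neighbouring cell $d$ cannot lie in $\lambda_{i-1}$ makes explicit what the paper leaves implicit. One harmless slip: under the paper's convention, rule (iii) makes the \emph{upper} of two equal entries in a column the larger cell, though, as you note, that rule plays no role here.
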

\begin{proof}
  There is nothing to verify when \( T \) is mergeable,
  so it suffices to show that when \( T \) is splittable,
  \begin{equation} \label{eq:splitting}
    (T^{(1)},\ldots,T^{(i-1)},T^{(i)}\setminus \{\cell(T)\},T',T^{(i+1)},\ldots,T^{(k)})
    \in X^\lambda_\mu.
  \end{equation}
  By the definition of the total order and the fact that \( T^{(i)} \) is semistandard
  and \( \cell(T) \) is largest in \( T^{(i)} \),
  there is no cell below or to the right of \( \cell(T) \) in \( T^{(i)} \),
  which implies that \( \cell(T) \) is a corner cell of \( T^{(i)} \).
  Then \( T^{(i)}\setminus\{\cell(T)\} \) is a valid skew shape, and thus
  we deduce \eqref{eq:splitting}.
\end{proof}

\begin{lem} \label{lem:involution}
  The map \( \Phi \) is a sign-reversing, weight-preserving involution.
\end{lem}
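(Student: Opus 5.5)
The plan is to check the three properties directly from Definition~\ref{def:involution}, using Lemma~\ref{lem:well-defined} to know that \( \Phi \) maps \( X_\mu^\lambda \) into itself. \emph{Weight preservation} is immediate: \( \Phi \) only regroups the cells of \( T \) into blocks and never changes an entry, so the multiset of entries, and hence \( \wt(T) \), is unchanged. \emph{Sign reversal} is also immediate: when \( \cell(T)\neq\emptyset \), a split raises \( \ell(T) \) by one and a merge lowers it by one; when \( \cell(T)=\emptyset \), \( T \) is a fixed point. So the content of the lemma is \( \Phi^2=\mathrm{id} \). It suffices to prove two things: if \( T \) is splittable with \( c=\cell(T) \), then \( \Phi(T) \) is mergeable with \( \cell(\Phi(T))=c \); and symmetrically, if \( T \) is mergeable with \( c=\cell(T) \), then \( \Phi(T) \) is splittable with \( \cell(\Phi(T))=c \).

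I would first isolate an auxiliary observation. Let \( B \) be a block and \( d \) a single cell such that \( B\sqcup\{d\} \) has skew shape and \( d \) is larger than every cell of \( B \) in the total order. Then \( B\sqcup\{d\} \) is automatically semistandard. Indeed, a cell \( e \) of \( B \) to the left of \( d \) in its row satisfies \( T(e)\le T(d) \), by (i). A cell \( e \) of \( B \) above \( d \) in its column satisfies \( T(e)\le T(d) \) by (i), and equality is excluded by rule (iii), since with equal entries in the same column the lower cell would be the smaller one. Consequently, a singleton block \( \{d\} \) following a block \( B \) is mergeable exactly when \( d \) is larger than the largest cell of \( B \).

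\emph{Splittable case.} Suppose \( c=\cell(T) \) lies in \( T^{(i)} \). In \( \Phi(T) \), the cell \( c \) forms a singleton block after \( T^{(i)}\setminus\{c\} \). Their union is \( T^{(i)} \), which is semistandard, and \( c \) is maximal in it, so \( c \) is mergeable in \( \Phi(T) \). Next I would check that no cell larger than \( c \) is splittable or mergeable in \( \Phi(T) \):
\begin{itemize}
\item Blocks other than the \( i \)-th, the new one \( \{c\} \), and the old \( T^{(i+1)} \) keep both their contents and their predecessor, so their status is unchanged from \( T \), and every such cell is \( <c \) by maximality of \( c \) in \( T \).
\item All cells of \( T^{(i)}\setminus\{c\} \) are \( <c \), so any splittable or mergeable cell there is smaller than \( c \).
\item The old \( T^{(i+1)} \) now has predecessor \( \{c\} \) instead of \( T^{(i)} \). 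If it has size \( >1 \), its status does not depend on its predecessor. If it is a singleton \( \{d\} \), the observation shows that it is mergeable in \( \Phi(T) \) iff \( d>c \) iff it was mergeable in \( T \); and in that case \( d>c \) would contradict the maximality of \( c \) in \( T \).
\end{itemize}
Hence \( \cell(\Phi(T))=c \) and it is mergeable, so \( \Phi(\Phi(T))=T \).

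\emph{Mergeable case.} This runs symmetrically. The merged block \( T^{(i-1)}\sqcup\{c\} \) has size \( >1 \) and \( c \) as its largest cell, so \( c \) is splittable in \( \Phi(T) \). The only cell whose status could newly change is a singleton \( \{d\}=T^{(i+1)} \), whose predecessor has changed, but its maximum is still \( c \). By the observation, \( d \) is mergeable in \( \Phi(T) \) iff \( d>c \) iff it was mergeable in \( T \), and in that case \( d>c \) is excluded by the maximality of \( c \). Every other cell in the merged block is \( <c \).

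The main obstacle I expect is this bookkeeping for the block immediately after the modified position, whose predecessor changes under \( \Phi \). The observation via rule (iii) is exactly what reduces its mergeability to a comparison with \( c \) alone.
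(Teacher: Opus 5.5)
Your proof is correct and is essentially the paper's argument: in both cases you show \( \cell(\Phi(T))=c \), and the only delicate point is the block immediately after the modified position, whose predecessor changes. Your auxiliary observation uses rule (iii) to make explicit a step that the paper merely asserts, namely that \( c' \) would already be mergeable in \( T \); note that the observation implicitly relies on \( d \) lying outside the outer partition of \( B \), so that only cells of \( B \) to its left or above it need checking.
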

\begin{proof}
  Let \( T=(T^{(1)},\ldots,T^{(k)})\in X_\mu^\lambda \) with \( \cell(T) \neq \emptyset \),
  and let \( c = \cell(T) \in T^{(i)} \).
  Since splitting and merging are inverse to each other,
  it is enough to show that \( \cell(\Phi(T)) = c \).

  We first suppose that \( T \) is splittable.
  After splitting, we obtain
  \[
    \Phi(T) = (T^{(1)}, \ldots, T^{(i-1)}, T^{(i)} \setminus \{c\}, T', T^{(i+1)}, \ldots, T^{(k)}),
  \]
  where \( T' = \{c\} \). Since \( |T'| = 1 \), \( T^{(i)} = (T^{(i)}\setminus \{c\}) \sqcup T' \)
  is semistandard, and \( c \) is largest in \( T^{(i)} \),
  the cell \( c \) is mergeable in \( \Phi(T) \).
  So we only need to check that there is no splittable or mergeable cell \( c' > c \)
  in \( \Phi(T) \).
  Suppose that such \( c' \) exists. Since \( c \) is largest in \( T^{(i)} \),
  the cell \( c' \) must be contained in \( T^{(j)} \) for some \( j\neq i \).
  Moreover, \( c' \) is neither splittable nor mergeable in \( T \) because \( c=\cell(T) \)
  is largest among splittable or mergeable cells in \( T \).
  Recall that whether \( c' \) is splittable (resp., mergeable) or not depends only on \( T^{(j)} \)
  (resp., \( T^{(j-1)} \) and \( T^{(j)} \)).
  From these facts, it follows that \( j = i+1 \) and \( c' \) is mergeable in
  \( \Phi(T) \) so that \( T^{(i+1)} = \{c'\} \) and \( T'\sqcup T^{(i+1)} \) is semistandard.
  However, in this case, \( c' \) is also mergeable in \( T \), which is a contradiction.
  We therefore deduce \( c = \cell(\Phi(T)) \), as desired.

  We now assume that \( T \) is mergeable. Then we have
  \[
    \Phi(T) = (T^{(1)}, \ldots, T^{(i-1)}\sqcup T^{(i)}, T^{(i+1)}, \ldots, T^{(k)}),
  \]
  where \( T^{(i)} = \{c\} \). Then by definition, \( c \) is splittable in \( \Phi(T) \).
  In addition, if \( c' \neq c \) is splittable or mergeable in \( \Phi(T) \),
  then so is \( c' \) in \( T \). Therefore, we obtain \( \cell(\Phi(T)) = c \),
  which completes the proof of \( \Phi(\Phi(T)) = T \).
  
  Furthermore, since the splitting and merging processes change the length of \( T \) by 1, and
  the weight \( \wt(T) \) depends only on integers in cells of \( T \), the involution \( \Phi \)
  is sign-reversing and weight-preserving.
\end{proof}

We illustrate the involution with three examples in Figure~\ref{fig:examples},
where the small number in the top-left corner of each cell indicates the index
\( i \) where the tableau \( T^{(i)} \) contains that cell.
For \( T_1 \), we have \( \cell(T_1) = (2, 2) \), which contains the entry~5. This cell is the unique cell of \( T^{(4)} \), and since \( T^{(3)} \sqcup T^{(4)} \) is semistandard, \( T_1 \) is mergeable.
For \( T_2 \), we have \( \cell(T_2) = (2, 2) \). Since there are other cells contained in
\( T^{(3)} \), \( T_2 \) is splittable. Moreover, the involution \( \Phi \) maps \( T_1 \)
to \( T_2 \), and vice-versa.
On the other hand, \( T_3 \) has no splittable or mergeable cell,
so \( \cell(T_3) = \emptyset \) and \( T_3 \) is fixed by \( \Phi \).

\begin{figure}
  \centering
  \begin{tikzpicture}[scale=0.8, line width=0.8pt]
    \begin{scope}[shift={(0,0)}]
      \brick{0}{0}{1}
      \brick{1}{0}{1}
      \brick{1}{1}{1}
      \brick{2}{1}{1}
      \brick{3}{1}{1}
      \brick{2}{0}{1}
      \node at (2, -0.75) {\(T_1\)};
      \node at (0.5, 0.5) {5};
      \node at (1.5, 0.5) {5};
      \node at (2.5, 0.5) {1};
      \node at (1.5, 1.5) {5};
      \node at (2.5, 1.5) {2};
      \node at (3.5, 1.5) {4};
      \tikzset{font={\fontsize{7pt}{14pt}\selectfont}}
      \node at (1.2, 1.8) {1};
      \node at (0.2, 0.8) {2};
      \node at (2.2, 1.8) {3};
      \node at (3.2, 1.8) {3};
      \node at (1.2, 0.8) {4};
      \node at (2.2, 0.8) {5};
    \end{scope}
    \begin{scope}[shift={(6,0)}]
      \brick{0}{0}{1}
      \brick{1}{0}{1}
      \brick{1}{1}{1}
      \brick{2}{1}{1}
      \brick{3}{1}{1}
      \brick{2}{0}{1}
      \node at (2, -0.75) {\(T_2\)};
      \node at (0.5, 0.5) {5};
      \node at (1.5, 0.5) {5};
      \node at (2.5, 0.5) {1};
      \node at (1.5, 1.5) {5};
      \node at (2.5, 1.5) {2};
      \node at (3.5, 1.5) {4};
      \tikzset{font={\fontsize{7pt}{14pt}\selectfont}}
      \node at (1.2, 1.8) {1};
      \node at (0.2, 0.8) {2};
      \node at (2.2, 1.8) {3};
      \node at (3.2, 1.8) {3};
      \node at (1.2, 0.8) {3};
      \node at (2.2, 0.8) {4};
    \end{scope}
    \begin{scope}[shift={(12,0)}]
      \brick{0}{0}{1}
      \brick{1}{0}{1}
      \brick{1}{1}{1}
      \brick{2}{1}{1}
      \brick{3}{1}{1}
      \brick{2}{0}{1}
      \node at (2, -0.75) {\(T_3\)};
      \node at (0.5, 0.5) {4};
      \node at (1.5, 0.5) {3};
      \node at (2.5, 0.5) {2};
      \node at (1.5, 1.5) {5};
      \node at (2.5, 1.5) {2};
      \node at (3.5, 1.5) {1};
      \tikzset{font={\fontsize{7pt}{14pt}\selectfont}}
      \node at (1.2, 1.8) {1};
      \node at (0.2, 0.8) {2};
      \node at (2.2, 1.8) {4};
      \node at (3.2, 1.8) {6};
      \node at (1.2, 0.8) {3};
      \node at (2.2, 0.8) {5};
    \end{scope}
  \end{tikzpicture}
  \caption{Examples of the involution \( \Phi \)}
  \label{fig:examples}
\end{figure}
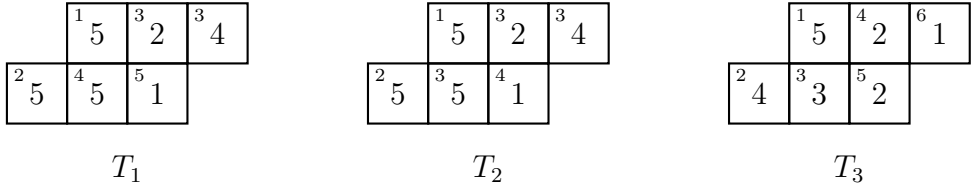

We now characterize fixed points of \( \Phi \).
\begin{lem}\label{lem:fixed}
  For \( T=(T^{(1)},T^{(2)},\ldots,T^{(\ell(T))})\in X_\mu^\lambda \),
  \( T \) is a fixed point of \( \Phi \) if and only if the following conditions hold:
\begin{itemize}
\item[(F1)]  \( \ell(T)=|\lambda|-|\mu| \), i.e., \( |T^{(i)}|=1 \) for all \( i \); and
\item[(F2)] enumerating the cells \( c_1<c_2< \cdots <c_{\ell(T)}\) in \( T \),
  we have \( c_{\ell(T)-i+1}\in T^{(i)} \) for all \( i \).
\end{itemize}
\end{lem}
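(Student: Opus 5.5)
Since \( \Phi(T)=T \) exactly when \( \cell(T)=\emptyset \), I will show that \( T \) has no splittable and no mergeable cell if and only if (F1) and (F2) hold.

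\emph{Forward direction.} Assume \( \cell(T)=\emptyset \). If some \( |T^{(i)}|>1 \), then the largest cell of \( T^{(i)} \) is splittable, which is a contradiction; this gives (F1). For (F2), each \( T^{(i)} \) is now a single cell \( d_i \). I claim that \( d_{i-1} > d_i \) for every \( i \ge 2 \). Suppose instead that \( d_{i-1}<d_i \). Then \( d_i \) is larger than every cell of \( T^{(i-1)} \), and it remains only to check that \( T^{(i-1)}\sqcup T^{(i)} \) is semistandard; if so, \( d_i \) is mergeable, which is a contradiction.

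\emph{The main obstacle} is exactly this semistandardness check. The union \( \lambda_{i}/\lambda_{i-2} \) is a skew shape with two cells. If these cells are not in the same row or column (more precisely, not adjacent), there is no constraint to check. If they are adjacent in a row, then \( d_{i-1} \) must be the left cell, since \( \lambda_{i-1} \) is a partition. In that case \( d_{i-1}<d_i \) means either \( T(d_{i-1})<T(d_i) \), or the values are equal and the column of \( d_{i-1} \) is smaller; both are fine for weak row increase. If they are adjacent in a column, then \( d_{i-1} \) is the upper cell. Here \( d_{i-1}<d_i \) with equal entries would require rule (iii), namely the same column with \( d_{i-1} \) in a lower row, which is false. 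Hence \( T(d_{i-1})<T(d_i) \), so the column strictly increases.

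Thus \( d_1>d_2>\cdots>d_k \). Enumerating the cells as \( c_1<\cdots<c_k \), this says \( d_i=c_{k-i+1} \), which is (F2).

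\emph{Converse.} Assume (F1) and (F2). No cell is splittable, because every \( |T^{(i)}|=1 \). By (F2), \( d_i<d_{i-1} \) for every \( i\ge 2 \), so \( d_i \) is not larger than every cell of \( T^{(i-1)} \), and hence no cell is mergeable. Therefore \( \cell(T)=\emptyset \), and \( T \) is fixed by \( \Phi \).
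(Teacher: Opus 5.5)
Your proof is correct and follows the paper's route: (F1) is equivalent to having no splittable cell, and (F2) is equivalent to having no mergeable cell, via the dichotomy \(d_{i-1}<d_i\) versus \(d_{i-1}>d_i\) for consecutive singletons. The one addition is that you spell out why \(d_{i-1}<d_i\) forces \(T^{(i-1)}\sqcup T^{(i)}\) to be semistandard, using that \(\lambda_{i-1}\) is a partition to put \(d_{i-1}\) on the left or above. The paper simply attributes this step to the definition of the total order.
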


\begin{proof}
  By definition, \( T \) is fixed by \( \Phi \) if and only if \( \cell(T) = \emptyset \),
  which holds if and only if \( T \) is neither splittable nor mergeable.
  Since condition~(F1) is equivalent to \( T \) not being splittable,
  it suffices to show that condition~(F2) is equivalent to \( T \) not being mergeable.
  For \( i=2,\dots,\ell(T) \), let \( c \) and \( c' \) be the cells such that
  \( T^{(i-1)}=\{c\} \) and \( T^{(i)}=\{c'\} \).
  If \( c < c' \), then \( T^{(i-1)}\sqcup T^{(i)} \) is semistandard by
  the definition of the total order, and thus \( c' \) is mergeable.
  If \( c > c' \), then \( c' \) is not mergeable by the definition of a mergeable cell.
  We conclude that \( c' \) is not mergeable if and only if \( c > c' \).
  Consequently, \( T \) is not mergeable if and only if condition~(F2) holds,
  which completes the proof.
\end{proof}

Let \( \Fix(\lambda/\mu) \) be the set of fixed points of \( \Phi \).
From \eqref{eq:antipode=sum_X}, combining Lemmas~\ref{lem:involution} and \ref{lem:fixed},
we derive
\begin{equation}\label{eq:antipode=sum_fixed}
  S(s_{\lambda/\mu})=(-1)^{|\lambda|-|\mu|} \sum_{T\in\Fix(\lambda/\mu)}\wt(T).
\end{equation}
We now provide another description of \( \Fix(\lambda/\mu) \).
A \emph{row-strict plane partition} is a plane partition with weakly decreasing
columns and strictly decreasing rows; see Figure~\ref{fig:RSPP+SSYT}.
\begin{figure}
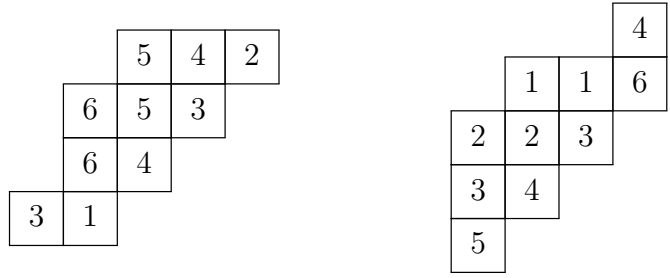

  \begin{center}
    \begin{ytableau}
      \none & \none  & 5 & 4 & 2 \\  \none & 6 & 5 & 3 \\ \none & 6 & 4 \\ 3 & 1
    \end{ytableau}
    \hspace{20mm}
    \begin{ytableau}
      \none & \none & \none & 4 \\ \none & 1 & 1 & 6 \\ 2 & 2 & 3 \\ 3 & 4 \\ 5
    \end{ytableau}
  \end{center}
  \caption{A row-strict plane partition and a semistandard Young tableau}
  \label{fig:RSPP+SSYT}
\end{figure}

There is a natural bijection between \( \Fix(\lambda/\mu) \)
and the set \( \RSPP(\lambda/\mu) \) of row-strict plane partitions of shape
\( \lambda/\mu \). For any \( T=(T^{(1)},T^{(2)},\ldots,T^{(\ell(T))})\in\Fix(\lambda/\mu) \),
condition~(F2) and the definition of the total order ensure that \( T=\bigsqcup_i T^{(i)} \)
is a row-strict plane partition.
Conversely, given a row-strict plane partition of shape \( \lambda/\mu \),
enumerate all cells with respect to the total order:
\[
  c_1<c_2< \cdots < c_{|\lambda|-|\mu|}.
\]
Setting \( T^{(i)} \) to be the singleton tableau containing only \( c_{|\lambda|-|\mu|-i+1} \),
we obtain an element \( T=(T^{(1)},T^{(2)},\ldots,T^{(|\lambda|-|\mu|)}) \in \Fix(\lambda/\mu) \),
by Lemma~\ref{lem:fixed}.
Thus, \eqref{eq:antipode=sum_fixed} becomes
\begin{equation}\label{eq:antipode=RSPP}
  S(s_{\lambda/\mu})=(-1)^{|\lambda|-|\mu|} \sum_{T\in\RSPP(\lambda/\mu)}x^T.
\end{equation}

We note that, upon reversing the order on the integers,
the row and column conditions of row-strict plane partitions become precisely
the column and row conditions of semistandard Young tableaux, respectively;
see Figure~\ref{fig:RSPP+SSYT}.
Using this observation and the symmetry of Schur functions,
the generating function of row-strict plane partitions can be expressed as
\begin{equation}\label{eq:schur=RSPP}
  \sum_{T\in\RSPP(\lambda/\mu)}x^T
    = s_{\lambda^t/\mu^t}(\dots,x_2,x_1)
    = s_{\lambda^t/\mu^t}(x_1,x_2,\dots).
\end{equation}
Combining \eqref{eq:antipode=RSPP} and \eqref{eq:schur=RSPP}, we obtain
\[
  S(s_{\lambda/\mu})= (-1)^{|\lambda|-|\mu|}s_{\lambda^t/\mu^t},
\]
which completes the proof and answers the question of Benedetti and Sagan.

\section*{Acknowledgements}
Y. Cho and H. Lee were supported by the National Research Foundation of Korea (NRF) grant funded by the Korea government RS-2025-00557835.
B.-H. Hwang was supported by a KIAS Individual Grant (AP098201) via the Center for Artificial Intelligence and Natural Sciences at Korea Institute for Advanced Study.


\begin{thebibliography}{1}

\bibitem{Benedetti2017}
C.~Benedetti and B.~E. Sagan.
\newblock Antipodes and involutions.
\newblock {\em Journal of Combinatorial Theory, Series A}, 148:275--315, 2017.

\bibitem{Grinberg2014}
D.~Grinberg and V.~Reiner.
\newblock {Hopf Algebras in Combinatorics}.
\newblock {\it Preprint}, \href{https://arxiv.org/abs/1409.8356}{arXiv:1409.8356}, 2014.

\bibitem{Hall1959}
P.~Hall.
\newblock The algebra of partitions.
\newblock {\em Proceedings 4th Canadian Mathematical Congress}, pages 147--159, 1959.

\bibitem{Takeuchi1971}
M.~Takeuchi.
\newblock Free hopf algebras generated by coalgebras.
\newblock {\em Journal of the Mathematical Society of Japan}, 23(4):561--582, 1971.

\end{thebibliography}
\end{document}